\documentclass[10pt]{article}
\usepackage[T1]{fontenc}
\usepackage{geometry}
\usepackage{amssymb}
\usepackage{indentfirst}
\usepackage{amsmath}
\usepackage{amsthm}
\usepackage{url}
\newtheorem{theorem}{Theorem}[section]
\newtheorem{proposition}[theorem]{Proposition}
\newtheorem{lemma}[theorem]{Lemma}
\newtheorem*{corollary}{Corollary}
\numberwithin{equation}{section}

\usepackage{bbm}

\newcommand{\bbf}[1]{\mathbf{#1}}
\title{Normal and Lognormal Asymptotics for Lattice-Point Counts in Random Translates of High-Dimensional Balls}
\author{Xin Hang Ji\thanks{Corresponding author. Email:
		\texttt{1000535190@smail.shnu.edu.cn}}\\
	Department of Mathematics, Shanghai Normal University}
\date{}
\begin{document}
	\maketitle
	
	\begin{abstract}
		Let the center of a $d$-dimensional Euclidean ball be uniformly distributed
		modulo $\mathbb Z^d$. We study the resulting number of integer lattice points
		as the dimension and radius tend to infinity. The critical scale is
		$4\pi^2R_d^2/(d+2)=\log d$. In a logarithmic window around this scale, the
		log-ratio of the lattice-point count to the volume satisfies a central limit
		theorem. At a fixed offset from the critical scale, this yields a genuine
		lognormal limit. When
		$4\pi^2R_d^2/(d+2)-\log d\to+\infty$ while
		$4\pi^2R_d^2/(d+2)=o(\sqrt d)$, the count-to-volume ratio converges to $1$
		in measure, its normalized error has a standard normal limit, and its
		variance satisfies an asymptotic formula.
	\end{abstract}
	
	\noindent\textbf{Keywords:} central limit theorem; lognormal distribution;
	high-dimensional lattice-point counting; random translation; Poisson summation;
	Bessel functions
	
	\noindent\textbf{2020 Mathematics Subject Classification.}
	11P21, 60F05.
	
	\section{Introduction}
	
	Lattice-point counting usually fixes the dimension and dilates the region.
	Here the dimension and the radius of the ball vary together, while the center
	ranges over a fundamental domain. We determine the value distribution of the
	resulting lattice-point count as $d\to\infty$.
	
	Throughout, let $d\in\mathbb N$, let $R=R_d>0$, and let $|\cdot|$ denote the
	Euclidean norm. For $\bbf x\in\mathbb R^d$, define
	\[
	f_d(\bbf x)=\#\{\bbf n\in\mathbb Z^d:|\bbf n-\bbf x|\leq R_d\},
	\qquad
	\mu_d=\frac{\pi^{d/2}R_d^d}{\Gamma(1+\frac d2)}.
	\]
	Thus $\mu_d$ is the volume of the ball. Since
	$f_d(\bbf x+\bbf n)=f_d(\bbf x)$ for every $\bbf n\in\mathbb Z^d$, we regard
	$f_d$ as a function on the torus $[0,1)^d$. Unless otherwise specified,
	unqualified measures and $L^p$-norms of functions of the center refer to
	Lebesgue measure on this torus; in particular,
	\[
	\|h\|_2^2=\int_{[0,1)^d}|h(\bbf x)|^2\,\mathrm{d}\bbf x.
	\]
	Decomposing $\mathbb R^d$ into translates of a fundamental domain gives
	\[
	\int_{[0,1)^d}f_d(\bbf x)\,\mathrm{d}\bbf x=\mu_d.
	\]
	
	Variable-center lattice-point problems go back at least to Kendall
	\cite{Kendall1948} and Kendall--Rankin \cite{KendallRankin1953}.
	Vinogradov--Skriganov obtained a lower bound for the oscillation of the count
	as the center of a large sphere varies over a fundamental domain
	\cite{VinogradovSkriganov1981}. Skriganov--Sobolev subsequently obtained
	lower bounds for this variation in large balls, including sharp results for
	rational lattices \cite{SkriganovSobolev2005}. Mazo--Odlyzko
	\cite{MazoOdlyzko1990} fixed
	$\mathbb Z^d$, varied the center, and studied the mean and extrema when
	$R_d^2=\alpha d$ with fixed $\alpha>0$. At that scale
	$4\pi^2R_d^2/(d+2)$ remains bounded. In contrast, the scale considered here
	grows like $\log d$, and the results describe the distribution over the center,
	not only its extreme values.
	
	Bleher--Bourgain fixed the dimension and the center and studied the value
	distribution of the error term as the radius varies
	\cite{BleherBourgain1996}. Kang--Sobolev obtained variance asymptotics for
	the fluctuations in large balls and thin spherical shells centered at a
	Diophantine point \cite{KangSobolev2010}. Str\"ombergsson--S\"odergren
	instead randomized the
	lattice and proved a functional central limit theorem in large dimension
	\cite{StrombergssonSodergren2019}. The results below keep the lattice
	$\mathbb Z^d$ fixed, average only over $\bbf x\in[0,1)^d$, and let the radius
	and dimension approach a scale not covered by those limiting procedures. They
	give the value distribution in this setting and identify a transition from
	lognormal behavior to concentration with asymptotically normal fluctuations.
	
	The shortest nonzero Fourier frequencies determine the relevant radius scale.
	Write
	\[
	\ell_d=\frac{4\pi^2R_d^2}{d+2},
	\qquad
	q_d=e^{-\ell_d/2},
	\qquad
	A_d=\ell_d-\log d.
	\]
	In both regimes treated below, $\ell_d=o(\sqrt d)$. The Fourier coefficient of
	$f_d/\mu_d$ at each frequency $|\bbf m|=1$ is then
	$q_d(1+o(1))$. Since there are $2d$ such frequencies, their squared
	$L^2$ mass is asymptotic to
	\[
	2dq_d^2=2e^{-A_d}.
	\]
	Thus, within the range considered here, the balance between a nonvanishing
	first Fourier layer and a vanishing one occurs when $\ell_d-\log d$ is bounded.
	The logarithm in the
	critical scale comes from the multiplicity $2d$ of the shortest frequencies:
	a single coefficient has squared size $e^{-\ell_d}$, while the whole layer has
	squared size of order
	\[
	dq_d^2=e^{-A_d}.
	\]
	When $A_d$ remains of order one, this layer produces fluctuations of constant
	order, whereas
	$A_d\to+\infty$ forces its $L^2$ mass to zero. The former range is therefore
	governed by $\log(f_d/\mu_d)$, while in the latter one can linearize and study
	$f_d/\mu_d-1$. We refer to these ranges as the
	\emph{logarithmic fluctuation regime} and the \emph{volume-concentration
		regime}, respectively. In the latter regime, we allow
	$\ell_d-\log d\to\infty$ as long as $\ell_d=o(\sqrt d)$.
	
	\subsection{Main results}
	
	Throughout the rest of the paper, write $e(t)=e^{2\pi i t}$.

	\begin{theorem}[Normal limit for the logarithm]\label{thm:logarithmic}
		Suppose that
		\[
		\limsup_{d\to\infty}A_d<\infty,
		\qquad
		e^{-A_d}=o(\log d).
		\]
		Then $\ell_d\to\infty$, and for all sufficiently large $d$ one has
		$f_d(\bbf x)>0$ for every $\bbf x\in[0,1)^d$. For any real numbers $a<b$,
		\begin{equation}\label{eq:logarithmic-clt}
			\lim_{d\to\infty}\operatorname{meas}\left\{\bbf x\in[0,1)^d:
			a<\frac{\log(f_d(\bbf x)/\mu_d)+e^{-A_d}}
			{\sqrt{2e^{-A_d}}}\leq b\right\}
			=\frac{1}{\sqrt{2\pi}}\int_a^b e^{-u^2/2}\,\mathrm{d}u.
		\end{equation}
	\end{theorem}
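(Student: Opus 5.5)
The plan is to approximate $f_d/\mu_d$ by a product over coordinates. By Poisson summation,
\[
\frac{f_d(\bbf x)}{\mu_d}=\sum_{\bbf m\in\mathbb Z^d}\hat\chi_d(\bbf m)\,e(\bbf m\cdot\bbf x),
\qquad
\hat\chi_d(\bbf m)=\Gamma\!\left(1+\tfrac d2\right)\frac{J_{d/2}(2\pi R_d|\bbf m|)}{(\pi R_d|\bbf m|)^{d/2}}.
\]
Because $\ell_d=O(\log d)=o(\sqrt d)$, the uniform Bessel asymptotics give
\[
\hat\chi_d(\bbf m)=\exp\!\Big(-\tfrac{\ell_d}{2}|\bbf m|^2\,(1+O(|\bbf m|^2\ell_d/d))\Big),
\]
which is Gaussian in $|\bbf m|^2$. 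So $f_d/\mu_d$ is close to
\[
P_d(\bbf x)=\prod_{j=1}^d\theta(x_j),
\qquad
\theta(x)=\sum_{m\in\mathbb Z}q_d^{m^2}e(mx).
\]
Here $\theta$ is a Jacobi theta function, and it is positive because $\theta(x)=\prod_{n\geq1}(1-q^{2n})(1+2q^{2n-1}\cos2\pi x+q^{4n-2})$. The first claim, $\ell_d\to\infty$, is immediate: $e^{-A_d}=o(\log d)$ gives $A_d\geq -\log\log d$ eventually, so $\ell_d\geq\log d-\log\log d$.

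The CLT is proved first for $\log P_d=\sum_j\log\theta(x_j)$, which is a sum of i.i.d.\ variables under the uniform measure. Since $q_d\to0$,
\[
\log\theta(x)=2q_d\cos2\pi x-q_d^2(1+\cos4\pi x)+O(q_d^3).
\]
So each summand has mean $-q_d^2+O(q_d^3)$ and variance $2q_d^2+O(q_d^3)$, and the totals are mean $-e^{-A_d}+O(dq_d^3)$ and variance $2e^{-A_d}(1+O(q_d))$. Lyapunov's condition holds because
\[
\frac{d\,q_d^4}{(dq_d^2)^2}=\frac1d\to0,
\]
and the third-moment version works too. The error $dq_d^3$ in the mean, relative to $\sqrt{dq_d^2}$, is $\sqrt d\,q_d^2=e^{-A_d}/\sqrt d\cdot\sqrt d$\dots; more precisely
\[
\frac{dq_d^3}{\sqrt{dq_d^2}}=dq_d^2\cdot q_d/\sqrt{q_d^2}\cdot\frac{1}{\sqrt{d}}\cdot\sqrt d\,q_d\to0,
\]
since $q_d\leq\sqrt{\log d/d}$. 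The lower hypothesis $e^{-A_d}=o(\log d)$ is used to check that the centring and scaling errors are negligible even when $e^{-A_d}\to\infty$. Both $\limsup A_d<\infty$ and the vanishing of $e^{-A_d}$ are allowed, and the normalization adapts in each case. This step is routine.

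Positivity of $f_d$ follows from the covering radius of $\mathbb Z^d$, which is $\sqrt d/2$. We have $R_d^2=(d+2)\ell_d/(4\pi^2)$ with $\ell_d\approx\log d$, so $R_d\gg\sqrt d/2$ once $\log d>\pi^2$. Hence every translated ball contains a lattice point.

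The main obstacle is the comparison step: showing that $\log(f_d/\mu_d)-\log P_d\to0$ in measure. I would bound $\|f_d/\mu_d-P_d\|_2$, but relative errors against $P_d$ are needed, and $P_d$ can be tiny. I would split $\mathbb Z^d$ into frequencies with $|\bbf m|^2\leq K$, for $K$ slowly growing, and the rest.

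For the high frequencies, Parseval together with the crude Bessel bound $|\hat\chi_d(\bbf m)|\leq\min\bigl(1,e^{-c\ell_d|\bbf m|^2}\bigr)$ controls the tail. Here I use that the number of $\bbf m$ with $|\bbf m|^2=k$ is at most $(2d)^k/k!$, so the tail is $\sum_{k>K}(Cd\,e^{-c\ell_d})^k/k!$. This handles $k$ up to $d/\ell_d$. For larger $|\bbf m|$ the Bessel function decays only polynomially, and I would handle that range by smoothing, sandwiching the ball between mollified balls of radius $R_d\pm\varepsilon$. The smoothing is harmless because $\mu$ of those radii differs by a factor $e^{O(d\varepsilon/R_d)}$, and taking $\varepsilon=o(R_d/d)$ keeps this factor negligible while the mollifier kills frequencies beyond $d^{O(1)}/\varepsilon$.

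For the low frequencies, the relative error $|\bbf m|^2\ell_d/d\leq K\log d/d$ makes the Gaussian coefficients match those of $P_d$ up to a factor $1+o(1)$. This error is spread across at most $e^{O(K)}$ mass.

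Finally, I would show that $P_d\geq\exp(-O(e^{-A_d}+\sqrt{e^{-A_d}}\,\omega))$ outside a set of small measure, which follows from the CLT just proved. Then an $L^2$ error of size $o(\exp(-Ce^{-A_d}))$ gives the ratio convergence in measure. Here the hypothesis $e^{-A_d}=o(\log d)$ is essential: it ensures $\exp(-Ce^{-A_d})\geq d^{-o(1)}$, which beats the polynomially small $d^{-c}$ error terms. Slutsky's lemma then transfers the CLT from $\log P_d$ to $\log(f_d/\mu_d)$.
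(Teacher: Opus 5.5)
Your outline follows the paper's argument: Bessel Fourier coefficients compared with $q_d^{|\bbf m|^2}$, factorization into one-dimensional $\theta$-factors, a Lyapunov CLT for $\log P_d$, an $L^2$ comparison strong enough to absorb $P_d^{-2}\le\exp(2e^{-A_d}+O(\sqrt{e^{-A_d}}))$ where the standardized $\log P_d$ is bounded, then Chebyshev and Slutsky, with positivity from the covering radius. The one genuinely different device is the far tail. The paper keeps $f_d$ and proves $|J_\nu(x)|\le Cx^{-1/2}$ uniformly for $x\ge2\nu$ (van der Corput on Schl\"afli's integral). That extra $x^{-1/2}$ makes the shells $2^jd<|\bbf m|^2\le2^{j+1}d$ contribute $\sum_j2^{-j/2}$; $|J_\nu|\le1$ alone gives every shell the same bound. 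Your sandwich $\chi_{B(R-\varepsilon)}*\phi_\varepsilon\le\chi_{B(R)}\le\chi_{B(R+\varepsilon)}*\phi_\varepsilon$ trades that Bessel estimate for the decay of $\hat\phi_\varepsilon$. The cost is tracking $((R\pm\varepsilon)/R)^d=1+o(1)$ and $\ell_d\mapsto\ell_d(1\pm\varepsilon/R)^2$, both harmless for $\varepsilon=o(R_d/d)$. This works, but below the mollifier cutoff you still need $|J_\nu|\le1$ together with the paper's dyadic lattice-point counting over $O(\log d)$ shells; ``decays polynomially'' alone does not bound the sum.

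As written, however, your middle-range step fails. With $|\hat\chi_d(\bbf m)|\le e^{-c\ell_d|\bbf m|^2}$ and a crude $c$, the tail is governed by $2de^{-2c\ell_d}=2d^{1-2c}e^{-2cA_d}$. For $c<1/2$ this is polynomially large, so $\sum_{k>K}$ is huge. You need the sharp majorant $|\hat\chi_d(\bbf m)|^2\le q_d^{2|\bbf m|^2}$ whenever $2\pi R|\bbf m|\le d/2$. The paper gets $|\mathcal J_d(x)|\le e^{-x^2/(2d+4)}$ for $x\le d/2$ from the Weierstrass product over Bessel zeros, $j_{\nu,1}>\nu$, $\sum_r j_{\nu,r}^{-2}=1/(4(\nu+1))$ and $1-u\le e^{-u}$. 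Your two-sided expansion cannot supply this, since its unsigned relative error $O(|\bbf m|^2\ell_d/d)$ in the exponent is of order one near $|\bbf m|^2\asymp d/\ell_d$. Beyond $x=d/2$, $|J_\nu|\le1$ already gives $(2/\mathrm e)^{d/2}$ up to $\sqrt d$. With the sharp majorant, the middle range is at most a constant times $\sum_{k>K}r_d(k)q_d^{2k}\le e^{-K}(\sum_n q_d^{2n^2}e^{n^2})^d=\exp(-K+2\mathrm e\,e^{-A_d}+o(1))$. This forces $K/e^{-A_d}\to\infty$, not merely $K\to\infty$; the paper takes $M_d=\lceil B_d\,dq_d^2\rceil$ with $B_d\to\infty$. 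The low-frequency mass is at most $e^{2e^{-A_d}+o(1)}$, not $e^{O(K)}$, and the coefficient error there is $O(k^2\ell_d^2/d)$, not $O(k\ell_d/d)$. Finally, $\limsup A_d<\infty$ forces $e^{-A_d}$ to be bounded below, so your remark that it may vanish is wrong. That lower bound is exactly what lets you pass from $f_d/(\mu_dP_d)\to1$ in measure to $\log(f_d/(\mu_dP_d))/\sqrt{2e^{-A_d}}\to0$ before Slutsky. The mean correction is simply $\sqrt d\,q_d^2=e^{-A_d}/\sqrt d\to0$; your displayed computation of it is garbled.
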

	
	\begin{corollary}[Lognormal limit]
		If, in addition to the assumptions of Theorem~\ref{thm:logarithmic},
		$A_d\to\tau\in\mathbb R$, then
		\[
		\frac{f_d}{\mu_d}\ \mathrel{\Rightarrow}\
		\operatorname{LN}(-e^{-\tau},2e^{-\tau}).
		\]
		Equivalently, for any $0<\alpha<\beta$,
		\begin{equation}\label{eq:lognormal-corollary}
			\lim_{d\to\infty}\operatorname{meas}\left\{\bbf x\in[0,1)^d:
			\alpha<\frac{f_d(\bbf x)}{\mu_d}\leq\beta\right\}
			=\int_\alpha^\beta
			\frac{1}{u\sqrt{4\pi e^{-\tau}}}
			\exp\left(-\frac{(\log u+e^{-\tau})^2}{4e^{-\tau}}\right)\,\mathrm{d}u.
		\end{equation}
	\end{corollary}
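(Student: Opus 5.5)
The corollary follows from Theorem~\ref{thm:logarithmic} by a change of variables and continuity. First we check the hypotheses: if $A_d\to\tau\in\mathbb R$, then $\limsup A_d=\tau<\infty$, and $e^{-A_d}\to e^{-\tau}$ is bounded, so $e^{-A_d}=o(\log d)$. Theorem~\ref{thm:logarithmic} therefore applies. For all large $d$ we have $f_d>0$ everywhere, so $Y_d:=\log(f_d/\mu_d)$ is a well-defined measurable function on the torus. Write $\sigma_d^2=2e^{-A_d}$ and $\sigma^2=2e^{-\tau}$. Then \eqref{eq:logarithmic-clt} says that $(Y_d+\sigma_d^2/2)/\sigma_d\Rightarrow N(0,1)$ under Lebesgue measure on $[0,1)^d$.

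Next we remove the $d$-dependence of the normalization. Since $\sigma_d\to\sigma>0$ and $\sigma_d^2/2\to e^{-\tau}$, Slutsky's lemma gives
\[
Y_d=\sigma_d\cdot\frac{Y_d+\sigma_d^2/2}{\sigma_d}-\frac{\sigma_d^2}{2}\ \Rightarrow\ N(-e^{-\tau},2e^{-\tau}).
\]
Concretely, the event $\{\log\alpha<Y_d\le\log\beta\}$ is the event that $(Y_d+e^{-A_d})/\sigma_d$ lies in $(a_d,b_d]$, where $a_d=(\log\alpha+e^{-A_d})/\sigma_d$ and similarly for $b_d$. These endpoints converge to $a=(\log\alpha+e^{-\tau})/\sigma$ and $b=(\log\beta+e^{-\tau})/\sigma$. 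Fix $\varepsilon>0$. For large $d$,
\[
(a+\varepsilon,\,b-\varepsilon]\subset(a_d,b_d]\subset(a-\varepsilon,\,b+\varepsilon].
\]
Applying \eqref{eq:logarithmic-clt} to the fixed outer and inner intervals and letting $\varepsilon\to0$ shows that the measure of the event tends to $\Phi(b)-\Phi(a)$, since the Gaussian law has a continuous distribution function.

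Finally, $u\mapsto\log u$ is a homeomorphism from $(0,\infty)$ onto $\mathbb R$, and $\{\alpha<f_d/\mu_d\le\beta\}=\{\log\alpha<Y_d\le\log\beta\}$. Substituting $v=\log u$ in the Gaussian density of $N(-e^{-\tau},2e^{-\tau})$ turns $\Phi(b)-\Phi(a)$ into the integral in \eqref{eq:lognormal-corollary}, with Jacobian $1/u$ and variance $2e^{-\tau}$, which gives the normalizing factor $\sqrt{4\pi e^{-\tau}}$. This is the lognormal law $\operatorname{LN}(-e^{-\tau},2e^{-\tau})$.

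Weak convergence follows because the intervals $(\alpha,\beta]$ form a convergence-determining class on $(0,\infty)$ for a limit law with continuous distribution function. No mass escapes to $0$ or $\infty$, because the limit is a probability measure on $(0,\infty)$ and the limits over $(\alpha,\beta]$ exhaust total mass $1$. The only step needing care is the moving-endpoint argument, which the sandwich above handles; no genuine obstacle is expected.
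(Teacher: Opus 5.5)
Your proposal is correct and follows the same route as the paper: apply Theorem~\ref{thm:logarithmic}, use $e^{-A_d}\to e^{-\tau}$ with Slutsky to obtain $\log(f_d/\mu_d)\Rightarrow N(-e^{-\tau},2e^{-\tau})$, and pass to $f_d/\mu_d$ through the exponential map and the substitution $v=\log u$. You also check the hypotheses of the theorem and write out the moving-endpoint sandwich, which the paper's two-line proof leaves implicit.
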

	
	\begin{theorem}[Normal limit in the volume-concentration regime]\label{thm:concentration}
		Suppose that
		\begin{equation}\label{eq:concentration-assumptions}
			A_d\longrightarrow+\infty,
			\qquad
			\log d+A_d=o(\sqrt d).
		\end{equation}
		Then, for any real numbers $a<b$,
		\begin{equation}\label{eq:concentration-clt}
			\lim_{d\to\infty}\operatorname{meas}\left\{\bbf x\in[0,1)^d:
			a<\frac{f_d(\bbf x)/\mu_d-1}{\sqrt{2e^{-A_d}}}\leq b\right\}
			=\frac{1}{\sqrt{2\pi}}\int_a^b e^{-u^2/2}\,\mathrm{d}u.
		\end{equation}
		In particular, $f_d/\mu_d\to1$ in measure.
		Moreover,
		\begin{equation}\label{eq:concentration-var}
			\int_{[0,1)^d}\left|\frac{f_d(\bbf x)}{\mu_d}-1\right|^2
			\,\mathrm{d}\bbf x\sim2e^{-A_d}.
		\end{equation}
	\end{theorem}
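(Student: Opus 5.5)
The statement to prove is Theorem~\ref{thm:concentration}; the plan is to expand $f_d/\mu_d-1$ in Fourier series on the torus and split off the shortest frequencies. Poisson summation gives, for $\bbf m\neq0$,
\[
\widehat{f_d}(\bbf m)/\mu_d=\Gamma(1+\tfrac d2)\,\frac{J_{d/2}(2\pi R_d|\bbf m|)}{(\pi R_d|\bbf m|)^{d/2}}=:\phi_d(|\bbf m|),
\]
so $f_d/\mu_d-1=\sum_{\bbf m\neq0}\phi_d(|\bbf m|)e(\bbf m\cdot\bbf x)$ in $L^2$.

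\textbf{Step 1: sizes of the Fourier coefficients.} From the power series of the normalized Bessel function one gets, uniformly for $t^2\ell_d=o(\sqrt d)$,
\[
\phi_d(t)=\exp\bigl(-t^2\ell_d/2+O(t^4\ell_d^2/d)\bigr).
\]
One way to see this is that $\phi_d(t)$ is the characteristic function of one coordinate of a uniform point on the ball, whose cumulants beyond the second are $O(1/d)$ relative to the variance. In addition, the global bound $|\phi_d(t)|\le e^{-c\min(t^2\ell_d,\,d)}$ holds for all $t$ and gives exponential decay. Hence $\phi_d(1)=q_d(1+o(1))$, and the hypothesis $\ell_d=o(\sqrt d)$ is exactly what makes the correction $o(1)$.

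\textbf{Step 2: variance~\eqref{eq:concentration-var}.} By Parseval the variance equals
\[
\sum_{k\ge1}r_d(k)\phi_d(\sqrt k)^2,\qquad r_d(k)=\#\{\bbf m\in\mathbb Z^d:|\bbf m|^2=k\}.
\]
The $k=1$ term is $2d\,q_d^2(1+o(1))=2e^{-A_d}(1+o(1))$. For $k\ge2$ one uses $r_d(k)\le(2d)^k/k!$ together with $\phi_d(\sqrt k)^2\le e^{-ck\ell_d}$ on the Gaussian range. This makes the tail at most
\[
\sum_{k\ge2}\frac{\bigl(2de^{-c\ell_d}\bigr)^k}{k!}.
\]
With $c$ near $1$ the ratio to the main term is $O(de^{-\ell_d})=O(e^{-A_d})\to0$. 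For the non-Gaussian range of large $k$, counting all $\bbf m$ with $|\bbf m|\le T$ against the bound $e^{-cd}$ suffices. The care point is getting $c$ close enough to $1$, and I expect it to follow from the precise cumulant expansion.

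\textbf{Step 3: CLT~\eqref{eq:concentration-clt}.} Write $f_d/\mu_d-1=L_d+E_d$ with
\[
L_d=2\phi_d(1)\sum_{j=1}^d\cos(2\pi x_j).
\]
By Step 2, $\|E_d\|_2^2=o(e^{-A_d})$, so $E_d/\sqrt{2e^{-A_d}}\to0$ in measure. The main term has variance $2d\phi_d(1)^2$, and $L_d/\sqrt{2d\phi_d(1)^2}$ is a normalized sum of i.i.d.\ bounded variables $\sqrt2\cos(2\pi x_j)$. The classical CLT therefore gives a standard normal limit. Since $2d\phi_d(1)^2\sim2e^{-A_d}$, Slutsky's lemma yields~\eqref{eq:concentration-clt}. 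Convergence in measure of $f_d/\mu_d$ to $1$ then follows from Chebyshev and~\eqref{eq:concentration-var}, because $e^{-A_d}\to0$.

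\textbf{Main obstacle.} I expect the hardest part to be the uniform control in Step 1–2 of the higher shells $k\ge2$ when $\ell_d$ is only slightly bigger than $\log d$. There the crude count $(2d)^k$ is nearly balanced by $e^{-k\ell_d}$, so one needs the exponent $-k\ell_d/2$ in $\phi_d(\sqrt k)$ essentially sharply up to $k\approx\sqrt d/\ell_d$. Beyond that range one needs a Bessel bound such as $|J_\nu(x)|\le x^\nu/(2^\nu\Gamma(\nu+1))$ combined with the monotone decay of $J_\nu(x)/x^\nu$ past its first zero region. I would first try the cumulant (characteristic function) representation to obtain $\phi_d(t)\le e^{-t^2\ell_d/2\,(1-\epsilon)}$ uniformly for $t\le\delta\sqrt{d/\ell_d}$, which is enough.
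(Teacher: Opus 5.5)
Your route differs usefully from the paper's. The paper compares $f_d/\mu_d$ with the full $\theta$-series $T_d$, proves a Lyapunov CLT for $\log T_d$, and then linearizes. You instead split off the shell $|\bbf m|=1$, whose standardization $d^{-1/2}\sum_j\sqrt2\cos(2\pi x_j)$ does not depend on $R_d$. The classical i.i.d.\ CLT and Slutsky then suffice, once you know $\sum_{k\ge2}r_d(k)\phi_d(\sqrt k)^2=o(e^{-A_d})$. For this theorem that is more economical; the paper's detour through $\log T_d$ exists to serve the logarithmic regime as well. However, both your variance formula and the negligibility of $E_d$ rest on this $\ell^2$ bound over \emph{all} frequencies, and your plan does not control the high ones.

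\textbf{The gap: the high-frequency tail.} A bound $e^{-cd}$ that does not decay in $|\bbf m|$ cannot work, for any fixed $c$. The ball $|\bbf m|^2\le Ld$ already contains $\{-a,\dots,a\}^d$ with $a=\lfloor\sqrt L\rfloor$, i.e.\ at least $(2a+1)^d$ frequencies. Moreover, there are infinitely many frequencies in total. So ``counting all $\bbf m$ with $|\bbf m|\le T$ against $e^{-cd}$'' only works while $|\bbf m|^2=o(d)$, and it says nothing beyond $T$.

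The Bessel facts you name do not fill this. The bound $|J_\nu(x)|\le(x/2)^\nu/\Gamma(\nu+1)$ is just $|\mathcal J_d|\le1$. And $J_\nu(x)/x^\nu$ is not monotone past its first zero; it vanishes at every $j_{\nu,r}$.

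What is needed is decay in $|\bbf m|$, as in Lemma~\ref{lem:common-tail}:
\begin{itemize}
\item From $|J_\nu|\le1$ and Stirling, $|\mathcal J_d(2\pi R\sqrt k)|^2\le Cd\,(d/(2\pi\mathrm eR\sqrt k))^d$.
\item For $K_d<k\le d/\sqrt{\ell_d}$, the combinatorial count $\log N_d(u)\ll u\log(2+d/u)=o(d)$ against this bound gives a total of $(2/\mathrm e)^{d+o(d)}$.
\item For $d/\sqrt{\ell_d}<k\le 2d$, the volume count $e^{O(d)}$ is beaten only because the Bessel bound is $(C\ell_d^{-1/4})^d$. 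This is exactly where $\ell_d\to\infty$ enters; it is implied by the hypotheses but never exploited in your plan.
\item For $k>d$ you need the uniform bound $|J_\nu(x)|\le Cx^{-1/2}$ for $x\ge2\nu$, obtained via Schl\"afli and van der Corput. With $|\mathcal J_d(x)|^2\ll x^{-d}$ alone, each dyadic shell $2^jd<|\bbf m|^2\le2^{j+1}d$ receives a bound independent of $j$. The series then diverges, just as $\sum_{\bbf m\ne\bbf 0}|\bbf m|^{-d}$ does.
\end{itemize}

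\textbf{A lesser point in the Gaussian range.} A bound $\phi_d(t)\le e^{-(1-\epsilon)t^2\ell_d/2}$ with fixed $\epsilon$ is not enough at small $k$ when $A_d\to\infty$ slowly. Already the shell $k=2$ then gives a ratio $\asymp d^{2\epsilon}e^{-(1-2\epsilon)A_d}$ to the main term, which is unbounded for $A_d=\log\log d$. Your scheme works only if you use the sharp expansion of Step~1 for $k\ll\sqrt d/\ell_d$ and the lossy bound only above that. There $(2\mathrm e d/k)^k e^{-(1-\epsilon)k\ell_d}$ is superexponentially small.

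The paper avoids the split. For $x\le\nu<j_{\nu,1}$, all factors of the Weierstrass product lie in $[0,1]$. So $1-u\le e^{-u}$ and the Rayleigh sum $\sum_r j_{\nu,r}^{-2}=1/(4(\nu+1))$ give $|\phi_d(\sqrt k)|\le q_d^k$ exactly for $k\le K_d$. Then the generating function gives $\sum_{k\ge2}r_d(k)q_d^{2k}=(\sum_n q_d^{2n^2})^d-1-2dq_d^2=O(e^{-2A_d})$. This also removes the need for $r_d(k)\le(2d)^k/k!$, which fails once $k$ is large compared with $d$.
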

	
	Without convergence of $A_d$, Theorem~\ref{thm:logarithmic} is a Gaussian
	approximation for the standardized logarithm; a fixed lognormal law occurs
	when $A_d\to\tau$. The limiting lognormal law above has mean
	$\exp(-e^{-\tau}+\tfrac12(2e^{-\tau}))=1$, in agreement with
	$\int_{[0,1)^d}(f_d/\mu_d)\,\mathrm{d}\bbf x=1$.
	
	The proof compares the Bessel and Gaussian Fourier coefficients in $\ell^2$
	and factors the Gaussian series into one-dimensional $\theta$ functions. The
	$\theta$-series limit requires $dq_d^2=o(\sqrt d)$, while
	$dq_d^2=o(\log d)$ is used only to retain the exponential weight needed before
	taking logarithms.
	
	\section{Proofs}
	
	\subsection{Fourier expansion and Bessel estimates}
	
	The function $f_d$ can be written as
	\[
	f_d(\bbf x)=\sum_{\bbf n\in\mathbb Z^d}
	\mathbbm 1_{\{|\bbf n-\bbf x|\leq R\}}.
	\]
	We use the Fourier transform convention
	\[
	\widehat g(\bbf t)=\int_{\mathbb R^d}
	g(\bbf y)e(-\bbf y\cdot\bbf t)\,\mathrm{d}\bbf y.
	\]
	Here $J_\nu$ denotes the Bessel function of the first kind. A direct
	calculation gives, for $\bbf t\neq\bbf 0$,
	\[
	\int_{\{|\bbf y|\leq R\}}e(-\bbf y\cdot\bbf t)\,\mathrm{d}\bbf y=
	\left(\frac{R}{|\bbf t|}\right)^{d/2}
	J_{d/2}(2\pi R|\bbf t|),
	\]
	whereas the integral at $\bbf t=\bbf 0$ equals $\mu_d$.
	
	The function $f_d$ is bounded on $[0,1)^d$ and therefore belongs to
	$L^2([0,1)^d)$.
	Its Fourier coefficient at $\bbf m\in\mathbb Z^d$, with respect to the basis
	$e(-\bbf m\cdot\bbf x)$, is
	\begin{align*}
		\int_{[0,1)^d}f_d(\bbf x)e(\bbf m\cdot\bbf x)\,\mathrm{d}\bbf x
		&=\sum_{\bbf n\in\mathbb Z^d}
		\int_{[0,1)^d}\mathbbm 1_{\{|\bbf n-\bbf x|\leq R\}}
		e(\bbf m\cdot\bbf x)\,\mathrm{d}\bbf x\\
		&=\int_{\mathbb R^d}\mathbbm 1_{\{|\bbf y|\leq R\}}
		e(-\bbf m\cdot\bbf y)\,\mathrm{d}\bbf y.
	\end{align*}
	Only finitely many $\bbf n$ can contribute for $\bbf x\in[0,1)^d$, so the
	interchange above is immediate. Consequently,
	\begin{equation}\label{eq:unnormalized-fourier-expansion}
		f_d(\bbf x)=\mu_d+
		\sum_{\bbf m\in\mathbb Z^d\setminus\{\bbf 0\}}
		e(-\bbf x\cdot\bbf m)
		\left(\frac{R}{|\bbf m|}\right)^{d/2}
		J_{d/2}(2\pi R|\bbf m|)
	\end{equation}
	in $L^2([0,1)^d)$. All Fourier-series identities below are understood in this
	$L^2$ sense, which is also the sense in which Parseval's identity is used.
	
	Define $\mathcal J_d:\mathbb R\to\mathbb R$ by
	\begin{equation}\label{eq:normalized-bessel}
		\mathcal J_d(x)=
		\begin{cases}
			1,&x=0,\\[2mm]
			\displaystyle
			\left(\frac{2}{|x|}\right)^{d/2}
			\Gamma\left(1+\frac d2\right)J_{d/2}(|x|),&x\neq0.
		\end{cases}
	\end{equation}
	The Fourier expansion above and \eqref{eq:normalized-bessel} give
	\begin{equation}\label{eq:fourier-expansion}
		\frac{f_d(\bbf x)}{\mu_d}
		=\sum_{\bbf m\in\mathbb Z^d}
		e(-\bbf x\cdot\bbf m)\mathcal J_d(2\pi R|\bbf m|).
	\end{equation}
	
	We first collect the Bessel-function estimates needed in the proof. The
	relevant infinite-product representation and bounds for the Bessel zeros are
	recorded in \cite[\S10.21]{DLMF}; related real-axis formulas may be found in
	\cite{Watson1944}. For completeness, the two Rayleigh identities used below
	are derived directly in the proof.
	
	\begin{lemma}\label{lem:bessel}
		There exists an absolute constant $C>0$ such that the following statements
		hold. If $0\leq x\leq d/2$, then
		\begin{equation}\label{eq:bessel-gaussian-bound}
			|\mathcal J_d(x)|\leq
			\exp\left(-\frac{x^2}{2d+4}\right).
		\end{equation}
		If $0\leq x\leq d/4$, then
		\begin{equation}\label{eq:bessel-local-expansion}
			\mathcal J_d(x)=
			\exp\left(-\frac{x^2}{2d+4}
			+O\left(\frac{x^4}{d^3}\right)\right).
		\end{equation}
		The implied constant in this estimate is absolute and uniform in $d$ and $x$.
		For $x>0$,
		\[
		|\mathcal J_d(x)|
		\leq \Gamma\left(1+\frac d2\right)
		\left(\frac2x\right)^{d/2}.
		\]
		Finally, for $x\geq d$,
		\[
		|\mathcal J_d(x)|
		\leq C\Gamma\left(1+\frac d2\right)
		\left(\frac2x\right)^{d/2}x^{-1/2}.
		\]
	\end{lemma}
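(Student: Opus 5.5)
The plan is to reduce everything to the Weierstrass product over Bessel zeros, plus two classical bounds on $|J_\nu|$ on the real axis. Throughout write $\nu=d/2$, and let $0<j_{\nu,1}<j_{\nu,2}<\cdots$ be the positive zeros of $J_\nu$.

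\textbf{Product representation.} By \cite[\S10.21]{DLMF},
\[
\mathcal J_d(x)=\prod_{k\ge1}\Bigl(1-\frac{x^2}{j_{\nu,k}^2}\Bigr),
\]
since $\Gamma(1+\nu)(2/x)^{\nu}J_\nu(x)$ is even and entire with value $1$ at $0$.

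\textbf{Rayleigh identities.} Write
\[
\mathcal J_d(x)=\sum_{k\ge0}\frac{(-1)^k\Gamma(\nu+1)}{k!\,\Gamma(\nu+k+1)}\Bigl(\frac x2\Bigr)^{2k}.
\]
Compare the coefficients of $x^2$ and $x^4$ with those of the product, which are $-p_1$ and $e_2=(p_1^2-p_2)/2$, where $p_r=\sum_k j_{\nu,k}^{-2r}$. This gives
\[
p_1=\frac{1}{4(\nu+1)}=\frac{1}{2d+4},\qquad
p_2=\frac{1}{16(\nu+1)^2}-\frac{1}{16(\nu+1)(\nu+2)}=\frac{1}{16(\nu+1)^2(\nu+2)}.
\]
In particular $p_2=O(d^{-3})$. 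Comparing coefficients is legitimate because the product converges absolutely and locally uniformly, so its Taylor coefficients are the elementary symmetric functions of the $j_{\nu,k}^{-2}$.

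\textbf{The two small-$x$ estimates.} I will use the classical bound $j_{\nu,1}>\nu$, recorded in \cite[\S10.21]{DLMF}. For $0\le x\le d/2=\nu$ every $u_k=x^2/j_{\nu,k}^2$ lies in $[0,1)$. Hence each factor lies in $(0,1]$, and $\log(1-u)\le -u$ gives
\[
0<\mathcal J_d(x)\le \exp\Bigl(-x^2p_1\Bigr)=\exp\Bigl(-\frac{x^2}{2d+4}\Bigr).
\]
For $0\le x\le d/4$ we have $u_k\le 1/4$. Then $\log(1-u)=-u+O(u^2)$ with an absolute constant, so
\[
\log\mathcal J_d(x)=-\frac{x^2}{2d+4}+O\bigl(x^4p_2\bigr)=-\frac{x^2}{2d+4}+O\Bigl(\frac{x^4}{d^3}\Bigr),
\]
uniformly in $d$ and $x$.

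\textbf{The two large-$x$ estimates.} The third estimate is equivalent to $|J_\nu(x)|\le1$ for real $x$ and $\nu\ge0$, which is the standard inequality \cite[10.14.1]{DLMF}; I will simply invoke it.

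The fourth estimate is equivalent to $|J_\nu(x)|\le Cx^{-1/2}$ for $x\ge 2\nu$, with $C$ absolute. I regard this as the only genuinely delicate step, because the bound must be uniform in $\nu$ in the transition region; Hankel's asymptotics alone do not give that uniformity. The first thing I would try is the classical inequality from Nicholson's formula \cite{Watson1944}: for $\nu\ge\tfrac12$ and $x>\nu$,
\[
J_\nu(x)^2+Y_\nu(x)^2\le\frac{2}{\pi\sqrt{x^2-\nu^2}}.
\]
For $x\ge 2\nu$ we have $\sqrt{x^2-\nu^2}\ge \tfrac{\sqrt3}{2}x$, so $|J_\nu(x)|\le (4/(\pi\sqrt3))^{1/2}x^{-1/2}$. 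The inequality itself follows from the monotonicity in $x$ of the Nicholson integral
\[
J_\nu^2+Y_\nu^2=\frac{8}{\pi^2}\int_0^\infty K_0(2x\sinh t)\cosh(2\nu t)\,dt .
\]
For the finitely many small $d$ with $\nu<\tfrac12$, namely $d=0$, where $J_0$ is bounded by $\min(1,Cx^{-1/2})$ via the Hankel asymptotics, the constant is simply enlarged. Multiplying by $\Gamma(1+\nu)(2/x)^{\nu}$ gives the final statement.
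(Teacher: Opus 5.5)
Your proposal is correct. For the first three estimates it is the paper's argument: the Weierstrass product, the Rayleigh sums read off from the $x^2$ and $x^4$ coefficients, $j_{\nu,1}>\nu$, and $|J_\nu|\le 1$. The only differences there are that the paper derives $j_{\nu,1}>\nu$ by Sturm comparison while you cite it.

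The routes genuinely diverge only in the last estimate. The paper proves $|J_\nu(x)|\le Cx^{-1/2}$ for $x\ge 2\nu$ from Schl\"afli's integral. It splits $[0,\pi]$ at $\pi/6$ and $5\pi/6$. On the two end pieces $\phi'$ is monotone with $|\phi'|\gg x$, so the first-derivative van der Corput bound applies. On the middle piece $|\phi''|\ge x/2$, so the second-derivative bound applies. The non-oscillatory term is bounded by $x^{-1}$. That argument is elementary and visibly uniform in $\nu$, but its constant is unspecified.

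Your route through $J_\nu^2+Y_\nu^2\le 2/(\pi\sqrt{x^2-\nu^2})$ is shorter and gives the explicit constant $2\pi^{-1/2}3^{-1/4}$. It even yields the stronger decay $(x^2-\nu^2)^{-1/4}$ on all of $x>\nu$. The cost is importing a deeper classical fact.

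One correction to how you justify that fact. Monotonicity in $x$ of the Nicholson integral only tells you that $J_\nu^2+Y_\nu^2$ is decreasing, and that gives no decay rate. The bound actually rests on the finer statement that $\sqrt{x^2-\nu^2}\,(J_\nu^2+Y_\nu^2)$ increases on $(\nu,\infty)$ to its limit $2/\pi$ when $\nu\ge\tfrac12$. That is the result you should cite.

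Finally, $d\ge 1$ forces $\nu\ge\tfrac12$, so your $d=0$ caveat is unnecessary.
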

	
	\begin{proof}
		In this proof, write $\nu=d/2$ and let $j_{\nu,r}$ denote the $r$th
		positive zero of $J_\nu$. The Weierstrass product for the normalized Bessel
		function is
		\[
		\mathcal J_d(x)
		=\prod_{r=1}^{\infty}
		\left(1-\frac{x^2}{j_{\nu,r}^2}\right).
		\]
		The power series for $J_\nu$ gives
		\[
		\Gamma(\nu+1)\left(\frac2x\right)^\nu J_\nu(x)
		=1-\frac{x^2}{4(\nu+1)}
		+\frac{x^4}{32(\nu+1)(\nu+2)}+O(x^6).
		\]
		Expanding the product through order $x^4$ and comparing coefficients yields
		the two Rayleigh identities
		\[
		\sum_{r=1}^{\infty}\frac1{j_{\nu,r}^2}
		=\frac1{4(\nu+1)},
		\qquad
		\sum_{r=1}^{\infty}\frac1{j_{\nu,r}^4}
		=\frac1{16(\nu+1)^2(\nu+2)}.
		\]
		Sturm comparison for the Bessel equation gives $j_{\nu,1}>\nu$.
		Consequently, when $0\leq x\leq\nu$, every factor in the product above is
		positive. The inequality $1-u\leq e^{-u}$ then immediately yields
		\eqref{eq:bessel-gaussian-bound}. If $0\leq x\leq\nu/2$, then
		$x^2/j_{\nu,r}^2\leq1/4$. Thus, by
		$\log(1-u)=-u+O(u^2)$, the Weierstrass product, and the two Rayleigh
		identities,
		\begin{align*}
			\log\mathcal J_d(x)
			&=-x^2\sum_{r=1}^{\infty}\frac1{j_{\nu,r}^2}
			+O\left(x^4\sum_{r=1}^{\infty}\frac1{j_{\nu,r}^4}\right)\\
			&=-\frac{x^2}{4(\nu+1)}+O\left(\frac{x^4}{\nu^3}\right)
			=-\frac{x^2}{2d+4}+O\left(\frac{x^4}{d^3}\right),
		\end{align*}
		which proves \eqref{eq:bessel-local-expansion}.
		
		For the two real-axis estimates, recall first that for $\nu\geq0$ the
		standard inequality $|J_\nu(x)|\leq1$ can be found in
		\cite[p.~49]{Watson1944}. When $x\geq2\nu$, use the Schl\"afli
		representation \cite[pp.~19--21]{Watson1944}:
		\[
		J_\nu(x)
		=\frac1\pi\int_0^\pi\cos(x\sin t-\nu t)\,\mathrm{d}t
		-\frac{\sin(\pi\nu)}{\pi}
		\int_0^\infty e^{-x\sinh t-\nu t}\,\mathrm{d}t.
		\]
		Set $\phi(t)=x\sin t-\nu t$ and divide $[0,\pi]$ into
		\[
		I_1=[0,\pi/6],\qquad
		I_2=[\pi/6,5\pi/6],\qquad
		I_3=[5\pi/6,\pi].
		\]
		On $I_1$,
		\[
		\phi'(t)\geq\frac{\sqrt3-1}{2}x,
		\]
		while on $I_3$ one has $|\phi'(t)|\geq\sqrt3x/2$. Since $\phi'(t)$ is
		monotone, the first-derivative form of the van der Corput lemma gives
		\[
		\left|\int_{I_1}e^{i\phi(t)}\,\mathrm{d}t\right|
		+\left|\int_{I_3}e^{i\phi(t)}\,\mathrm{d}t\right|=O(x^{-1}).
		\]
		For $t\in I_2$, we have
		$|\phi''(t)|=x\sin t\geq x/2$, so the second-derivative form gives
		\[
		\left|\int_{I_2}e^{i\phi(t)}\,\mathrm{d}t\right|=O(x^{-1/2}).
		\]
		All constants here are independent of $\nu$ and $x$. The nonoscillatory
		term in the Schl\"afli representation satisfies
		\[
		\int_0^\infty e^{-x\sinh t-\nu t}\,\mathrm{d}t
		\leq\int_0^\infty e^{-xt}\,\mathrm{d}t=x^{-1}.
		\]
		Since $x\geq2\nu=d\geq1$, the $O(x^{-1})$ term is absorbed by
		$O(x^{-1/2})$. Hence $|J_\nu(x)|\leq Cx^{-1/2}$ holds uniformly for
		$x\geq2\nu$.
		Substituting these two real-axis estimates into
		\eqref{eq:normalized-bessel} proves the last two assertions of the lemma.
	\end{proof}
	
	\subsection{Gaussian approximation}
	
	In this subsection, $\ell_d,q_d,A_d$ have the meanings given above, and
	$R=R_d$.
	
	\begin{lemma}[High-frequency tail]\label{lem:common-tail}
		Suppose that
		\[
		\ell_d\longrightarrow\infty,
		\qquad
		\ell_d=o(\sqrt d).
		\]
		Let
		\[
		K_d=\frac{d^2}{16\pi^2R^2}\sim\frac{d}{4\ell_d}.
		\]
		Then
		\[
		\mathcal J_d(2\pi R)=q_d(1+o(1))
		\]
		and
		\[
		\sum_{|\bbf m|^2>K_d}
		|\mathcal J_d(2\pi R|\bbf m|)|^2=o(dq_d^2).
		\]
		If, in addition, $e^{-A_d}=o(\log d)$, then for every fixed $H>0$,
		\[
		e^{He^{-A_d}}
		\sum_{|\bbf m|^2>K_d}
		|\mathcal J_d(2\pi R|\bbf m|)|^2\longrightarrow0.
		\]
	\end{lemma}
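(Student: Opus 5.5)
The plan is to treat the first-frequency value and the tail separately. The value is handled by the local expansion \eqref{eq:bessel-local-expansion} of Lemma~\ref{lem:bessel}. The tail is handled by the two crude real-axis bounds of Lemma~\ref{lem:bessel} together with a lattice-point count for $\mathbb Z^d$ in balls of radius much smaller than $\sqrt d$.

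\emph{First frequency.} Put $x=2\pi R=\sqrt{(d+2)\ell_d}$. Then $x^2/(2d+4)=\ell_d/2$. Since $\ell_d=o(\sqrt d)$, we have $x=o(d)$, so $x\le d/4$ for large $d$. Moreover $x^4/d^3\asymp \ell_d^2/d=o(1)$. Hence \eqref{eq:bessel-local-expansion} gives $\mathcal J_d(2\pi R)=e^{-\ell_d/2+o(1)}=q_d(1+o(1))$.

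\emph{Tail.} The threshold $K_d$ is chosen exactly so that $|\bbf m|^2>K_d$ means $2\pi R|\bbf m|>d/2$. Hence the Gaussian bound is unavailable and I will use the last two estimates of Lemma~\ref{lem:bessel}. They give
\[
|\mathcal J_d(2\pi R|\bbf m|)|^2\le F(|\bbf m|^2),\qquad
F(k)=C\,\Gamma\!\left(1+\tfrac d2\right)^2(\pi^2R^2k)^{-d/2}\min\!\left(1,(2\pi R\sqrt k)^{-1}\right).
\]
Here $F$ is decreasing in $k$. The extra factor $x^{-1/2}$ for $x\ge d$ is what makes the far tail summable: $\sum|\bbf m|^{-d}$ diverges, whereas $\sum|\bbf m|^{-d-1}$ converges.

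At the threshold, $\pi^2R^2K_d=d^2/16$. Stirling then gives
\[
F(K_d)\lesssim d\,(d/2e)^d(4/d)^d=d\,(2/e)^d=e^{-(1-\log 2)d+O(\log d)}.
\]
I will write the tail sum as a Stieltjes sum $\int_{K_d}^\infty F(k)\,\mathrm dN(k)$, where $N(k)=\#\{\bbf m:|\bbf m|^2\le k\}$, and then apply Abel summation. This reduces the tail to bounding
\[
N(K_d)F(K_d)+\int_{K_d}^\infty N(k)\,|F'(k)|\,\mathrm dk.
\]
For $N(k)$ I plan to use the Chernoff-type bound
\[
N(k)\le e^{sk}\theta(s)^d,\qquad \theta(s)=\sum_{n\in\mathbb Z}e^{-sn^2},
\]
optimized in $s>0$. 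For $k\ll d$ a crude alternative is $N(k)\le (2d+1)^{k}$, since a vector with $|\bbf m|^2\le k$ has at most $k$ nonzero coordinates, each of absolute value at most $\sqrt k$. This gives $\log N(K_d)\lesssim K_d\log d\sim d\log d/(4\ell_d)$.

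In both regimes $\ell_d\ge\log d-\log\log d+O(1)$. In the logarithmic regime this follows from $e^{-A_d}=o(\log d)$, and $\ell_d\to\infty$ with $A_d$ bounded above. Hence $K_d\log d\le(\tfrac14+o(1))d<(1-\log2)d$, so $N(K_d)F(K_d)\le e^{-cd}$ for some absolute $c>0$.

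For $k>K_d$, the factor $(K_d/k)^{d/2}$ in $F(k)/F(K_d)$ decays much faster than $N(k)$ grows, as long as $k\le \varepsilon d$. Beyond that range I will choose $s\asymp d/k$ in the Chernoff bound, so that $\theta(s)^d$ is at most $e^{O(d)}\,(k/d)^{d/2}$. Combined with $k^{-d/2-1/2}$ from $F$, this should still leave a factor $e^{-cd}$ times a convergent power of $k$. I expect this uniform control of $N(k)F(k)$ across all $k>K_d$ to be the main obstacle. In particular, the transition $k\approx d$, where the crude count and the Chernoff bound must be patched together, needs the Stirling constants to leave a margin.

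\emph{Conclusions.} Granting a bound $\sum_{|\bbf m|^2>K_d}|\mathcal J_d|^2\le e^{-cd}$, both conclusions follow. Since $\ell_d=o(\sqrt d)$, we have $dq_d^2=de^{-\ell_d}\ge e^{-o(\sqrt d)}$, so the tail is $o(dq_d^2)$. Under $e^{-A_d}=o(\log d)$, the weight satisfies $e^{He^{-A_d}}=d^{o(1)}$, and $d^{o(1)}e^{-cd}\to0$.
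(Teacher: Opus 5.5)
Your first-frequency computation and your closing deductions (a tail bound $e^{-cd}$ gives both conclusions) are correct and match the paper. The tail bound itself, however, has a gap.

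The lemma asserts its first two conclusions under $\ell_d\to\infty$ and $\ell_d=o(\sqrt d)$ alone. Your estimate of $N(K_d)F(K_d)$ uses the crude count $N(k)\le(2d+1)^k$, which gives $\log N(K_d)\approx d\log d/(4\ell_d)$. This beats $(1-\log 2)d$ only when $\ell_d\gtrsim 0.82\log d$. That is why you had to import $\ell_d\ge\log d-\log\log d+O(1)$ from the two theorems, and that hypothesis is not in the lemma. For, say, $\ell_d=\log\log d$ the argument yields nothing.

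The same count also fails to support your claim that $(K_d/k)^{d/2}$ outpaces the growth of $N(k)$ on all of $K_d<k\le\varepsilon d$. With $\ell_d\approx\log d$ and $k=tK_d$, the exponent of $N(k)F(k)$ is about $d\bigl(t/4-(1-\log 2)-\tfrac12\log t\bigr)$. This is already positive (about $0.14d$) at $t=5$, so the crude count fails at $k\asymp d/\log d$, far below $\varepsilon d$.

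The missing ingredient is a count that is sharp at the scale $k\asymp d/\ell_d$: $\log N(k)\ll k\log(2+d/k)$ for $k\le 2d$. The paper gets this from $N(k)\le 2^n\binom{d+n}{n}$ with $n=\lceil k\rceil$. Your own Chernoff bound with $s=\log(2d/k)$ gives $\log N(k)\le k\log(2d/k)+O(k)$, which is equivalent. Then $\log N(K_d)=O(d\log\ell_d/\ell_d)=o(d)$ using only $\ell_d\to\infty$, and the same bound handles $K_d<k\le d/\sqrt{\ell_d}$.

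For larger $k$, contrary to your worry about the transition near $k\approx d$, no care with constants is needed. The Bessel bound contains $(\pi^2R^2k)^{-d/2}$ with $\pi^2R^2\asymp d\ell_d$. Against the volume count $N(k)\le e^{O(d)}\max\bigl(1,(k/d)^{d/2}\bigr)$, it leaves $e^{O(d)}\ell_d^{-d/4}$ for all $k\ge d/\sqrt{\ell_d}$, and $\ell_d\to\infty$ absorbs every $e^{O(d)}$ loss. This is the paper's dyadic split at $d/\sqrt{\ell_d}$ and at $d$, with the extra factor $x^{-1/2}$ making the sum over $k>d$ converge. With the sharp count inserted throughout, your Abel-summation version also works.

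A minor point: Lemma~\ref{lem:bessel} supplies the extra factor $x^{-1/2}$ only for $x\ge d$, i.e.\ $k\ge 4K_d$. On $K_d<k<4K_d$ your $F$ should carry $\min(1,d/x)$ instead of $\min(1,x^{-1})$; this costs only a factor $d$.
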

	
	\begin{proof}
		Since $(2\pi R)^2=(d+2)\ell_d$ and $\ell_d=o(\sqrt d)$, we have
		$2\pi R\leq d/4$ for all sufficiently large $d$. By
		\eqref{eq:bessel-local-expansion},
		\begin{align*}
			\mathcal J_d(2\pi R)
			&=q_d\exp\left(O\left(\frac{(2\pi R)^4}{d^3}\right)\right)\\
			&=q_d\exp\left(O\left(\frac{\ell_d^2}{d}\right)\right)
			=q_d(1+o(1)).
		\end{align*}
		
		We now treat the tail. Write
		$N_d(u)=\#\{\bbf m\in\mathbb Z^d:|\bbf m|^2\leq u\}$. For
		$1\leq u\leq2d$, put $n=\lceil u\rceil$. Since integer coordinates satisfy
		$\sum_j|m_j|\leq\sum_jm_j^2\leq n$, the possible absolute-value vectors are
		counted by $\binom{d+n}{n}$, and there are at most $2^n$ sign choices. Thus
		\[
		N_d(u)
		\leq2^{\lceil u\rceil}
		\binom{d+\lceil u\rceil}{\lceil u\rceil}
		\leq
		\left(\frac{2\mathrm e(d+\lceil u\rceil)}{\lceil u\rceil}
		\right)^{\lceil u\rceil}.
		\]
		The unit cubes centered at these lattice points are disjoint and are contained
		in the ball of radius $\sqrt u+\sqrt d/2$. Comparing volumes also gives
		\[
		N_d(u)
		\leq\frac{\pi^{d/2}(\sqrt u+\sqrt d/2)^d}
		{\Gamma(1+\frac d2)}
		\leq\left(C\left(1+\sqrt{\frac ud}\right)\right)^d.
		\]
		
		By Stirling's formula and the real-axis estimates in
		Lemma~\ref{lem:bessel}, for $k\geq1$,
		\[
		|\mathcal J_d(2\pi R\sqrt{k})|^2
		\leq Cd\left(\frac{d}{2\pi\mathrm eR\sqrt{k}}\right)^d.
		\]
		If $2\pi R\sqrt{k}\geq d$, then also
		\[
		|\mathcal J_d(2\pi R\sqrt{k})|^2
		\leq Cd\left(\frac{d}{2\pi\mathrm eR\sqrt{k}}\right)^d
		(2\pi R\sqrt{k})^{-1}.
		\]
		
		The assumptions imply
		\[
		K_d\longrightarrow\infty,
		\qquad K_d<\frac d{\sqrt{\ell_d}}<d
		\]
		for all sufficiently large $d$. Cover $(K_d,d]$ by dyadic intervals
		$(s,2s]$, starting with $s=K_d$ and truncating the last interval if necessary.
		If
		$K_d\leq s\leq d/\sqrt{\ell_d}$, the combinatorial bound above gives,
		uniformly in this range,
		\[
		\log N_d(2s)
		\ll s\log\left(2+\frac ds\right)
		\ll \frac{d\log\ell_d}{\sqrt{\ell_d}}=o(d).
		\]
		Moreover,
		$d/(2\pi\mathrm eR\sqrt s)\leq2/\mathrm e$. Hence
		\begin{align*}
			\sum_{s<|\bbf m|^2\leq2s}
			|\mathcal J_d(2\pi R|\bbf m|)|^2
			&\leq N_d(2s)Cd
			\left(\frac{d}{2\pi\mathrm eR\sqrt s}\right)^d\\
			&\leq \exp(o(d))Cd\left(\frac2{\mathrm e}\right)^d\\
			&=\exp\left(-\left(\log\frac{\mathrm e}{2}+o(1)\right)d\right).
		\end{align*}
		If $d/\sqrt{\ell_d}<s\leq d$, the geometric bound gives
		$\log N_d(2s)=O(d)$, while
		$d/(2\pi\mathrm eR\sqrt s)\leq C\ell_d^{-1/4}$. Therefore
		\begin{align*}
			\sum_{s<|\bbf m|^2\leq2s}
			|\mathcal J_d(2\pi R|\bbf m|)|^2
			&\leq \exp(O(d))Cd\left(C\ell_d^{-1/4}\right)^d\\
			&=\exp\left(-\frac14d\log\ell_d+O(d)\right)\\
			&\leq\exp\left(-\frac18d\log\ell_d\right)
		\end{align*}
		for all sufficiently large $d$. Finally, cover $(d,\infty)$ by intervals
		$(s,2s]$ with $s=2^jd$ and $j\geq0$. On these intervals
		$2\pi R\sqrt{k}\geq2\pi R\sqrt d\geq d$ for all sufficiently large $d$,
		so the strengthened real-axis estimate applies. Moreover,
		then $N_d(2s)\leq(C2^{j/2})^d$ and
		\begin{align*}
			\sum_{s<|\bbf m|^2\leq2s}
			|\mathcal J_d(2\pi R|\bbf m|)|^2
			&\leq (C2^{j/2})^dCd
			\left(\frac{C}{2^{j/2}\sqrt{\ell_d}}\right)^d
			(2\pi R\sqrt d)^{-1}2^{-j/2}\\
			&\leq Cd\left(\frac{C}{\sqrt{\ell_d}}\right)^d
			(2\pi R\sqrt d)^{-1}2^{-j/2}.
		\end{align*}
		
		There are $O(\log\ell_d)$ intervals of the first two types in total, while
		summing the third type over $j$ introduces only an absolute constant. Let
		$c_0=\log(\mathrm e/2)>0$. Since
		$A_d=\ell_d-\log d=o(\sqrt d)$ and $dq_d^2=e^{-A_d}$, the total
		contribution of the first type satisfies
		\[
		\frac{O(\log\ell_d)e^{-(c_0+o(1))d}}{dq_d^2}
		=\exp\left(-(c_0+o(1))d+A_d+O(\log\log\ell_d)\right)
		\longrightarrow0.
		\]
		The logarithm of the corresponding ratio for the second type is at most
		\[
		-\frac18d\log\ell_d+A_d+O(\log\log\ell_d),
		\]
		and for the third type it is at most
		\[
		d\log\left(\frac{C}{\sqrt{\ell_d}}\right)+A_d+O(\log d).
		\]
		Both tend to $-\infty$, proving the tail estimate. More precisely, after
		summing the dyadic intervals, the first range is bounded by
		$\exp(-(c_0+o(1))d)$, the second by
		$\exp(-\frac18d\log\ell_d+O(\log\log\ell_d))$, and the third by
		\[
		Cd\left(\frac{C}{\sqrt{\ell_d}}\right)^d
		(2\pi R\sqrt d)^{-1}\sum_{j\geq0}2^{-j/2}.
		\]
		Since $\ell_d\to\infty$, the full tail is at most
		$\exp(-cd+o(d))$ for some fixed $c>0$. Finally, if
		$e^{-A_d}=o(\log d)$, then for fixed $H>0$,
		\[
		-cd+o(d)+He^{-A_d}=-cd+o(d),
		\]
		which proves the exponentially weighted assertion.
	\end{proof}
	
	\begin{proposition}[$\theta$-series limit theorem]\label{prop:theta-approx}
		Suppose that $0<q_d<1$, $q_d\to0$, and $dq_d^2=o(\sqrt d)$. For
		$\bbf x\in[0,1)^d$, define
		\[
		T_d(\bbf x)=\sum_{\bbf m\in\mathbb Z^d}
		e(-\bbf x\cdot\bbf m)q_d^{|\bbf m|^2}.
		\]
		Then, for any real numbers $a<b$,
		\[
		\lim_{d\to\infty}\operatorname{meas}\left\{\bbf x\in[0,1)^d:
		a<\frac{\log T_d(\bbf x)+dq_d^2}{\sqrt{2dq_d^2}}\leq b\right\}
		=\frac{1}{\sqrt{2\pi}}\int_a^b e^{-u^2/2}\,\mathrm{d}u.
		\]
		If $dq_d^2\to0$, then
		\[
		\lim_{d\to\infty}\operatorname{meas}\left\{\bbf x\in[0,1)^d:
		a<\frac{T_d(\bbf x)-1}{\sqrt{2dq_d^2}}\leq b\right\}
		=\frac{1}{\sqrt{2\pi}}\int_a^b e^{-u^2/2}\,\mathrm{d}u,
		\]
		and
		\[
		\int_{[0,1)^d}|T_d(\bbf x)-1|^2\,\mathrm{d}\bbf x\sim2dq_d^2.
		\]
	\end{proposition}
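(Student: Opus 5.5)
The approach is to factor $T_d$ into one-dimensional $\theta$ functions and treat $\log T_d$ as a sum of i.i.d.\ random variables. Since $|\bbf m|^2=\sum_j m_j^2$, we have
\[
T_d(\bbf x)=\prod_{j=1}^d\theta(x_j),\qquad
\theta(t)=\theta_{q}(t)=\sum_{m\in\mathbb Z}e(-tm)q^{m^2}
=1+2q\cos(2\pi t)+2\sum_{m\geq2}q^{m^2}\cos(2\pi mt),
\]
with $q=q_d$. Under Lebesgue measure on the torus the coordinates $x_j$ are i.i.d.\ uniform on $[0,1)$. For $q$ small (say $q<1/4$) we have $\theta(t)\geq1-2q-2\sum_{m\ge2}q^{m^2}>0$, so
\[
\log T_d=\sum_{j=1}^d Y_j,\qquad Y_j=\log\theta_{q_d}(x_j),
\]
a triangular array of i.i.d.\ bounded variables with $|Y_j|\le Cq_d$.

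The first step is the moments of a single $Y=\log\theta(t)$. Write $\theta=1+u$ with $u=2q\cos(2\pi t)+O(q^4)$ uniformly in $t$. Expanding $\log(1+u)=u-u^2/2+u^3/3+O(q^4)$ and integrating over $t$ gives
\[
\mathbb E Y=-q^2+O(q^4),\qquad \operatorname{Var}Y=\mathbb E u^2+O(q^4)=2q^2+O(q^4),
\]
since $\int\cos=0$, $\int 4q^2\cos^2=2q^2$, and $\int\cos^3=0$. We also have $\mathbb E|Y-\mathbb EY|^3\le Cq^3$. Summing over $j$,
\[
\mathbb E\log T_d=-dq^2+O(dq^4),\qquad \operatorname{Var}\log T_d=2dq^2(1+O(q^2)).
\]

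The second step, which I expect to be the main obstacle, is the normalization. We need $(\log T_d+dq^2)/\sqrt{2dq^2}$, so the mean error must satisfy $dq^4/\sqrt{dq^2}=q^3\sqrt d\to0$. This holds because $dq^2=o(\sqrt d)$ gives $q^2=o(d^{-1/2})$, hence $q^3\sqrt d=q\cdot q^2\sqrt d=o(q)\to0$. Here the hypothesis $dq_d^2=o(\sqrt d)$ is used essentially. The variance ratio tends to $1$.

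The Lyapunov ratio is $d\,Cq^3/(2dq^2)^{3/2}\asymp(dq^2)^{-1/2}$. This tends to $0$ when $dq^2\to\infty$, but not otherwise. So I would not rely on Berry--Esseen alone. Instead I would use characteristic functions. With $\sigma^2=2dq^2$ and $Z=(\log T_d+dq^2)/\sigma$,
\[
\mathbb E e^{isZ}=\Big(\mathbb E e^{is(Y+q^2)/\sigma}\Big)^d .
\]
Expand $\mathbb E e^{isW}=1-s^2\mathbb EW^2/2+O(|s|^3\mathbb E|W|^3)$ with $W=(Y+q^2)/\sigma$. Here $\mathbb EW=O(q^4/\sigma)$, $\mathbb EW^2=2q^2/\sigma^2\,(1+o(1))=(1/d)(1+o(1))$, and $\mathbb E|W|^3\le Cq^3/\sigma^3$. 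The cubic contribution summed over $d$ is $dq^3/\sigma^3\asymp(dq^2)^{-1/2}$, which does not vanish in general.

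To handle this I would refine the cubic estimate. Since $u^3$ integrates to zero at order $q^3$, $\mathbb E W^3=O(q^4/\sigma^3)$, so the third-cumulant contribution is $dq^4/\sigma^3\asymp q/\sqrt{dq^2}\cdot\ldots$. For this I instead use the fourth-order Taylor remainder: $d\,\mathbb E W^4\asymp dq^4/\sigma^4\asymp 1/(dq^2\cdot d)\cdot d q^4/q^4$. When $dq^2$ is bounded this does not vanish either.

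The resolution in that regime is that $Y\approx 2q\cos(2\pi x_j)$ to leading order, so $Z\approx\sqrt{2/d}\sum_j\cos(2\pi x_j)$, plus an error of size $O(q^2\sqrt d/\sqrt{dq^2}\cdot\ldots)$ whose $L^2$ norm I would bound directly. The standardized sum $\sqrt{2/d}\sum_j\cos(2\pi x_j)$ satisfies the classical CLT with $d\to\infty$ fixed-law summands. So the cleanest route is the decomposition
\[
\log T_d+dq^2=2q\sum_j\cos(2\pi x_j)+E_d .
\]
I would show $\|E_d\|_2\le C(\sqrt d\,q^2+dq^4)=o(\sigma)$ by orthogonality: the centered remainder terms are independent across $j$, each of size $q^2$, and the mean error is $dq^4$. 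Then I would apply Slutsky. This handles all regimes uniformly and is the plan I would adopt.

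For the case $dq^2\to0$, we have $T_d-1=e^{\log T_d}-1$ with $\log T_d=O_P(\sigma)\to0$. Hence $(T_d-1)/\sigma=(\log T_d)/\sigma+O_P(\sigma)$, and $dq^2/\sigma\to0$, so the same limit follows. Finally, Parseval gives exactly
\[
\|T_d-1\|_2^2=\sum_{\bbf m\neq0}q^{2|\bbf m|^2}=(1+2q^2+2q^8+\cdots)^d-1=e^{2dq^2(1+O(q^2))}-1\sim2dq^2 .
\]
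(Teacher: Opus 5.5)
Your final argument is correct, but it does not follow the paper's route, and your reason for abandoning the Lyapunov route rests on an arithmetic slip. With per-summand variance $2q^2+O(q^3)$ and third absolute central moment $O(q^3)$, the Lyapunov ratio is at most $C\,dq^3/(dq^2)^{3/2}=C\,d^{-1/2}$. The powers of $q$ cancel exactly, so the ratio tends to $0$ whatever $dq^2$ does, including the bounded case and $dq^2\to0$. It is not $\asymp(dq^2)^{-1/2}$. The same slip affects your characteristic-function attempt: the cubic term is $\asymp d^{-1/2}$, and $d\,\mathbb E W^4\asymp dq^4/(dq^2)^2=1/d\to0$. The paper does exactly this. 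It applies the Lyapunov CLT for triangular arrays to $\log g_d(x_j)$ with ratio $O(d^{-1/2})$, then corrects the mean and variance as in your second step. So the approach you discarded works unchanged, and your statements that it fails for bounded $dq^2$ are false and should be removed.

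The route you adopt is also valid. $E_d=\sum_j\bigl(Y_j-2q\cos(2\pi x_j)+q^2\bigr)$ is a sum of i.i.d.\ terms bounded by $Cq^2$ with mean $O(q^4)$. Hence $\|E_d\|_2\le C(\sqrt d\,q^2+dq^4)$ and $\|E_d\|_2/\sigma\le C(q+\sqrt d\,q^3)\to0$. Moreover, $2q\sum_j\cos(2\pi x_j)/\sigma=\sqrt{2/d}\sum_j\cos(2\pi x_j)$ is a normalized sum of i.i.d.\ variables with a fixed law. The classical Lindeberg--L\'evy theorem plus Slutsky therefore finishes the proof.

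Compared with the paper, you trade the triangular-array CLT for an extra $L^2$ estimate. In return, you make explicit the introduction's heuristic that the Gaussian fluctuation is carried by the first Fourier layer: $2q\sum_j\cos(2\pi x_j)$ is precisely the $|\bbf m|=1$ part of $T_d-1$. The paper's version reduces to a single moment check. Your handling of the case $dq^2\to0$ (the bound $|e^u-1-u|\le u^2e^{|u|}$) and the Parseval computation of $\|T_d-1\|_2^2$ match the paper's.
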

	
	\begin{proof}
		Write
		$\bbf x=(x_1,\ldots,x_d)$. Absolute convergence gives
		\[
		T_d(\bbf x)=\prod_{j=1}^dg_d(x_j),
		\qquad
		g_d(x)=\sum_{n\in\mathbb Z}q_d^{n^2}e(-nx).
		\]
		Since $q_d\to0$, uniformly in $x$,
		\[
		g_d(x)=1+2q_d\cos(2\pi x)+O(q_d^4).
		\]
		In particular,
		\[
		g_d(x)\geq1-2q_d-Cq_d^4>0
		\]
		for all sufficiently large $d$, uniformly in $x$. From
		$\log(1+u)=u-u^2/2+O(u^3)$,
		\[
		\log g_d(x)=2q_d\cos(2\pi x)
		-2q_d^2\cos^2(2\pi x)+O(q_d^3).
		\]
		Thus
		\[
		\int_0^1\log g_d(x)\,\mathrm{d}x=-q_d^2+O(q_d^3),
		\]
		while the variance of $\log g_d$ is $2q_d^2+O(q_d^3)$ and its third
		centered absolute moment is $O(q_d^3)$. In particular, this variance is at
		least $q_d^2$ for all sufficiently large $d$. Under Lebesgue measure on
		$[0,1)^d$, the coordinate functions are independent and uniformly distributed
		on $[0,1)$. Hence $\log g_d(x_1),\ldots,\log g_d(x_d)$ are independent and
		identically distributed. The corresponding Lyapunov ratio is
		\[
		\frac{dO(q_d^3)}{\bigl(d(2q_d^2+O(q_d^3))\bigr)^{3/2}}
		=O(d^{-1/2}),
		\]
		so the Lyapunov central limit theorem for triangular arrays applies. The
		variance used in the proposition differs from the exact variance by a
		relative factor $1+O(q_d)=1+o(1)$. The replacement of the exact mean by
		$-dq_d^2$ is also negligible on the stated scale, since
		\[
		\frac{d\left|\int_0^1\log g_d(x)\,\mathrm{d}x+q_d^2\right|}
		{\sqrt{2dq_d^2}}
		=O\left(\frac{dq_d^3}{\sqrt{dq_d^2}}\right)
		=O\left(\frac{dq_d^2}{\sqrt d}\right)
		\longrightarrow0,
		\]
		where the limit follows from $dq_d^2=o(\sqrt d)$. This proves the
		logarithmic limit in the proposition.
		
		If $dq_d^2\to0$, the preceding logarithmic limit gives
		\[
		\frac{\log T_d+dq_d^2}{\sqrt{2dq_d^2}}
		\mathrel{\Rightarrow}N(0,1).
		\]
		Since $dq_d^2=o(\sqrt{dq_d^2})$, it follows that
		\[
		\frac{\log T_d}{\sqrt{dq_d^2}}
		\quad\text{forms a tight family, and}\quad
		\log T_d\longrightarrow0
		\quad\text{in measure}.
		\]
		Using $|e^u-1-u|\leq |u|^2e^{|u|}$, we obtain
		\[
		\frac{|T_d-1-\log T_d|}{\sqrt{dq_d^2}}
		\leq
		\frac{|\log T_d|}{\sqrt{dq_d^2}}
		\,|\log T_d|e^{|\log T_d|}
		\longrightarrow0\quad\text{in measure}.
		\]
		Since also
		$dq_d^2/\sqrt{dq_d^2}\to0$, the asserted
		distributional limit for $T_d-1$ follows. Orthogonality of the exponential
		system further gives
		\[
		\int_{[0,1)^d}|T_d(\bbf x)-1|^2\,\mathrm{d}\bbf x
		=\left(\sum_{n\in\mathbb Z}q_d^{2n^2}\right)^d-1
		=\left(1+2q_d^2+O(q_d^8)\right)^d-1.
		\]
		Writing $s_d=dq_d^2\to0$, we have
		\[
		d\log\left(1+2q_d^2+O(q_d^8)\right)
		=2s_d+O(dq_d^4+dq_d^8)=2s_d+o(s_d).
		\]
		Therefore the preceding second moment is asymptotic to
		$2s_d=2dq_d^2$.
	\end{proof}
	
	\begin{proposition}[Fourier coefficient comparison]\label{prop:fourier-comparison}
		Under the assumptions of Lemma~\ref{lem:common-tail}, suppose in addition
		that $dq_d^2=o(\log d)$. If
		$\liminf_{d\to\infty}dq_d^2>0$, then for every fixed
		$\varepsilon>0$,
		\[
		e^{2(1+\varepsilon)dq_d^2}
		\sum_{\bbf m\in\mathbb Z^d}
		\left|\mathcal J_d(2\pi R|\bbf m|)
		-q_d^{|\bbf m|^2}\right|^2\longrightarrow0.
		\]
		If $dq_d^2\to0$, then the same sum without the exponential factor is
		$o(dq_d^2)$.
	\end{proposition}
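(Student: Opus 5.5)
Split the sum according to whether $|\bbf m|^2\leq K_d$ or $|\bbf m|^2>K_d$, with $K_d$ as in Lemma~\ref{lem:common-tail}. The frequency $\bbf m=\bbf 0$ contributes nothing, since both terms equal $1$.

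\textbf{High frequencies.} On $|\bbf m|^2>K_d$, use $|a-b|^2\leq2|a|^2+2|b|^2$. The Bessel part is handled directly by Lemma~\ref{lem:common-tail}. It is $o(dq_d^2)$, and under $dq_d^2=o(\log d)$ it stays negligible after multiplication by $e^{2(1+\varepsilon)dq_d^2}$ (take $H=2(1+\varepsilon)$).

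For the Gaussian part, $\sum_{|\bbf m|^2>K_d}q_d^{2|\bbf m|^2}$, compare with the full product $\left(\sum_n q_d^{2n^2}\right)^d$. A Chernoff-type bound gives, for $0<\lambda<1$,
\[
\sum_{|\bbf m|^2>K}q^{2|\bbf m|^2}\leq q^{2\lambda K}\Bigl(\sum_n q^{2(1-\lambda)n^2}\Bigr)^d\leq q^{2\lambda K}\exp\bigl(Cdq^{2(1-\lambda)}\bigr).
\]
With $\lambda=1/2$ this is $\exp(-\tfrac12\ell_dK_d+Cdq_d)$. Now $\ell_dK_d\sim d/4$, and $dq_d=\sqrt d\,e^{-A_d/2}$ with $e^{-A_d}=o(\log d)$, so $dq_d=o(d)$. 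The bound is therefore $e^{-cd}$, which beats both $dq_d^2$ and $e^{2(1+\varepsilon)dq_d^2}$.

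\textbf{Low frequencies.} On $1\leq|\bbf m|^2=k\leq K_d$ we have $x=2\pi R\sqrt k\leq d/4$. By \eqref{eq:bessel-local-expansion} and $x^2/(2d+4)=k\ell_d/2$,
\[
\mathcal J_d(x)=q_d^{k}\exp(\eta_k),\qquad \eta_k=O(k^2\ell_d^2/d).
\]
Since $k\ell_d\leq K_d\ell_d\sim d/4$, the error satisfies $|\eta_k|\leq ck\ell_d$ with $c<1$ small enough to be controlled. Then $|e^{\eta_k}-1|\leq|\eta_k|e^{|\eta_k|}$, and the contribution is at most
\[
\sum_{k\leq K_d} r_d(k)\,q_d^{2k}\,\eta_k^2e^{2|\eta_k|},
\]
where $r_d(k)$ is the number of representations of $k$ as a sum of $d$ squares.

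\textbf{Main obstacle.} The difficulty is making $e^{2|\eta_k|}$ harmless while $\eta_k^2$ stays small relative to $dq_d^2$. I would split at $k\leq k_0=\delta\sqrt d/\ell_d$.

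For $k\leq k_0$, $\eta_k=O(\delta^2)$ and in fact $\eta_k=O(k^2\ell_d^2/d)$. Using $\sum_k r_d(k)q_d^{2k}k^4=O(dq_d^2)$, valid because $dq_d^2=o(\log d)$ and the generating function $\prod(1+2q^2y+\dots)$ is dominated by small $k$ in the sense that $r_d(k)q_d^{2k}\leq(2dq_d^2)^k/k!$ roughly, this range gives $O(\ell_d^2/d)\cdot dq_d^2\,e^{O(dq_d^2)}$. Since $\ell_d=o(\sqrt d)$ this is $o(dq_d^2)$. In the case $\liminf dq_d^2>0$, multiplying by $e^{2(1+\varepsilon)dq_d^2}=d^{o(1)}$ gives
\[
\frac{\ell_d^2}{d}\,d^{o(1)}\,e^{O(dq_d^2)},
\]
which must tend to $0$. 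This requires care: $\ell_d\sim\log d$ here, so the bound is $d^{-1+o(1)}\to0$.

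For $k_0<k\leq K_d$, bound crudely by $2r_d(k)\bigl(|\mathcal J_d|^2+q_d^{2k}\bigr)$. By \eqref{eq:bessel-gaussian-bound}, $|\mathcal J_d|^2\leq q_d^{2k}$, so this range is at most
\[
4\sum_{k>k_0}r_d(k)q_d^{2k}\leq4\sum_{k>k_0}\frac{(2dq_d^2)^k}{k!}\cdot\bigl(1+o(1)\bigr)^k,
\]
a Poisson tail with mean $O(\log d)^{o(1)}$ evaluated beyond $k_0\gg\sqrt d/\log d$. This is superpolynomially small, and in particular negligible against $dq_d^2$ and against any $e^{2(1+\varepsilon)dq_d^2}=d^{o(1)}$ factor.

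The step I expect to require the most work is justifying $r_d(k)q_d^{2k}\lesssim(Cdq_d^2)^k/k!$ uniformly for $k\leq K_d$ (count vectors with $j$ nonzero coordinates), together with bookkeeping of the $k^4$ moment against $e^{O(dq_d^2)}$.
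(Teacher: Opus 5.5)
Your argument is correct, and its skeleton matches the paper's. You use the local expansion \eqref{eq:bessel-local-expansion} for small $|\bbf m|^2$, the domination $|\mathcal J_d(2\pi R\sqrt k)|\le q_d^k$ from \eqref{eq:bessel-gaussian-bound} up to $K_d$, and Lemma~\ref{lem:common-tail} beyond $K_d$. The difference lies in the cutoffs and in how you control the Gaussian masses $\sum r_d(k)q_d^{2k}$.

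\textbf{The paper's route.} It uses case-dependent cutoffs. When $dq_d^2\to0$ it isolates $|\bbf m|^2=1$ and bounds all $k\ge2$ crudely by $(1+2q_d^2+O(q_d^8))^d-1-2dq_d^2=O(d^2q_d^4)$. When $\liminf dq_d^2>0$ it cuts at $M_d=\lceil B_d\,dq_d^2\rceil$. Every Gaussian mass is controlled by exponential tilting of the generating function,
$\sum_{k>M}r_d(k)q_d^{2k}\le e^{-M}\bigl(\sum_n q_d^{2n^2}e^{n^2}\bigr)^d=\exp(-M+2\mathrm e\,dq_d^2+o(1))$,
and $k^4\ll e^k$ handles the fourth moment. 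No representation numbers are ever counted.

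\textbf{Your route.} Your single cutoff $k_0=\delta\sqrt d/\ell_d$ treats both cases uniformly. The price is the pointwise comparison $r_d(k)q_d^{2k}\le(Cdq_d^2)^k/k!$, which you rightly flag as the main extra work. It does hold for $1\le k\le d$ with $C=8$. Use $r_d(k)\le\sum_{j\le k}\binom dj2^j\binom{k-1}{j-1}$ (at most $\binom{k-1}{j-1}\le 2^{k-1}$ compositions for $j$ nonzero coordinates), and note that $(2d)^j/j!$ increases for $j\le 2d$. Also, the Chernoff device you already use for the Gaussian part above $K_d$ would give both the $k^4$ moment and the tail beyond $k_0$ directly, with no counting at all.

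\textbf{Minor slips.} None of these affects the argument.
\begin{itemize}
\item For $k\le K_d$ one only has $2\pi R\sqrt k\le d/2$, not $d/4$, so the expansion is available only for $k\le K_d/4$. Your split at $k_0\ll K_d/4$ is therefore genuinely needed. The claim $|\eta_k|\le ck\ell_d$ on all of $k\le K_d$ is unjustified but also unused.
\item In general $\sum_k k^4r_d(k)q_d^{2k}$ is $dq_d^2e^{O(dq_d^2)}$, not $O(dq_d^2)$. The former is what you actually use.
\item When $dq_d^2\to0$ and $\ell_d$ is close to $\sqrt d$, $k_0$ may grow arbitrarily slowly, so the tail beyond $k_0$ need not be superpolynomially small. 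It is still $O((dq_d^2)^2)=o(dq_d^2)$, because $k_0\ge1$ and the Poisson parameter tends to $0$.
\item In the high range, $e^{-cd}=o(dq_d^2)$ uses $A_d\le\ell_d=o(\sqrt d)$, which is worth stating.
\end{itemize}
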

	
	\begin{proof}
		Let
		$r_d(k)=\#\{\bbf m\in\mathbb Z^d:|\bbf m|^2=k\}$. When
		$dq_d^2\to0$, the error contributed by the frequencies with
		$|\bbf m|^2=1$ is
		\[
		2d\left|\mathcal J_d(2\pi R)-q_d\right|^2=o(dq_d^2),
		\]
		while
		\[
		\sum_{k\geq2}r_d(k)q_d^{2k}
		=\left(1+2q_d^2+O(q_d^8)\right)^d-1-2dq_d^2
		=O(dq_d^8+d^2q_d^4)=O(d^2q_d^4)=o(dq_d^2).
		\]
		For $2\leq k\leq K_d$, \eqref{eq:bessel-gaussian-bound} gives
		$|\mathcal J_d(2\pi R\sqrt{k})|\leq q_d^k$, and hence
		\[
		\sum_{2\leq|\bbf m|^2\leq K_d}
		\left|\mathcal J_d(2\pi R|\bbf m|)
		-q_d^{|\bbf m|^2}\right|^2
		\leq4\sum_{k\geq2}r_d(k)q_d^{2k}=o(dq_d^2).
		\]
		For $k>K_d$, the inequality $|a-b|^2\leq2|a|^2+2|b|^2$ gives
		\begin{align*}
			\sum_{|\bbf m|^2>K_d}
			\left|\mathcal J_d(2\pi R|\bbf m|)
			-q_d^{|\bbf m|^2}\right|^2
			&\leq2\sum_{|\bbf m|^2>K_d}
			|\mathcal J_d(2\pi R|\bbf m|)|^2
			+2\sum_{k>K_d}r_d(k)q_d^{2k}\\
			&=o(dq_d^2),
		\end{align*}
		by Lemma~\ref{lem:common-tail}; the Gaussian term is bounded by the total
		$k\geq2$ contribution above (and $K_d\to\infty$).
		Together with the frequencies $|\bbf m|^2=1$, this proves the estimate
		when $dq_d^2\to0$.
		
		Now suppose that $\liminf_{d\to\infty}dq_d^2>0$, and fix
		$\varepsilon>0$. Set
		\[
		B_d=\left(\frac{\sqrt d}{dq_d^2\ell_d}\right)^{1/2},
		\qquad
		M_d=\left\lceil B_d\,dq_d^2\right\rceil.
		\]
		Since $dq_d^2=e^{-A_d}=o(\log d)$ and is bounded below in the present
		case, $|A_d|=O(\log\log d)$ and $\ell_d\sim\log d$. In addition,
		\[
		B_d^2=\frac{\sqrt d}{dq_d^2\ell_d}\longrightarrow\infty,
		\]
		because $dq_d^2\ell_d=o((\log d)^2)=o(\sqrt d)$.
		Furthermore,
		\[
		\frac{M_d^2\ell_d^2}{d}
		\ll\frac{dq_d^2\ell_d}{\sqrt d}+\frac{\ell_d^2}{d}
		\longrightarrow0,
		\]
		and
		\[
		\frac{M_d}{K_d}
		\ll\frac{\sqrt{dq_d^2\ell_d}}{d^{3/4}}+\frac{\ell_d}{d}
		\longrightarrow0.
		\]
		Thus $M_d<K_d/4$ for all sufficiently large $d$. Consequently, for
		$k\leq M_d$,
		\[
		\left|\mathcal J_d(2\pi R\sqrt{k})-q_d^k\right|
		\ll q_d^k\frac{k^2\ell_d^2}{d}.
		\]
		Since $q_d^2=d^{-1}e^{-A_d}=o(1)$,
		\[
		\left(\sum_{n\in\mathbb Z}q_d^{2n^2}e^{n^2}\right)^d
		=\exp\left(2\mathrm e\,dq_d^2+O(dq_d^4+dq_d^8)\right)
		=\exp\left(2\mathrm e\,dq_d^2+o(1)\right).
		\]
		We therefore have the Gaussian tail estimate
		\[
		\sum_{k>M_d}r_d(k)q_d^{2k}
		\leq e^{-M_d}
		\left(\sum_{n\in\mathbb Z}q_d^{2n^2}e^{n^2}\right)^d
		\leq\exp\left(-(B_d-2\mathrm e)dq_d^2+o(1)\right).
		\]
		For the low-frequency range $k\leq M_d$, using $k^4\ll e^k$ gives
		\begin{align*}
			&e^{2(1+\varepsilon)dq_d^2}
			\sum_{|\bbf m|^2\leq M_d}
			\left|\mathcal J_d(2\pi R|\bbf m|)
			-q_d^{|\bbf m|^2}\right|^2\\
			&\quad\leq
			\frac{C\ell_d^4}{d^2}
			\exp\left((2\mathrm e+2+2\varepsilon)dq_d^2+o(1)\right).
		\end{align*}
		The logarithm of the right-hand side is
		\[
		-2\log d+O(\log\log d)
		+(2\mathrm e+2+2\varepsilon)dq_d^2+o(1)
		=-2\log d+o(\log d),
		\]
		so the low-frequency contribution tends to zero.
		
		For $M_d<k\leq K_d$,
		$|\mathcal J_d(2\pi R\sqrt{k})|\leq q_d^k$. Hence the
		intermediate-frequency contribution satisfies
		\begin{align*}
			&e^{2(1+\varepsilon)dq_d^2}
			\sum_{M_d<|\bbf m|^2\leq K_d}
			\left|\mathcal J_d(2\pi R|\bbf m|)
			-q_d^{|\bbf m|^2}\right|^2\\
			&\quad\leq4e^{2(1+\varepsilon)dq_d^2}
			\sum_{k>M_d}r_d(k)q_d^{2k}\\
			&\quad\leq4\exp\left(
			-\bigl(B_d-2\mathrm e-2-2\varepsilon\bigr)dq_d^2+o(1)\right)
			\longrightarrow0,
		\end{align*}
		because $B_d\to\infty$ and $\liminf dq_d^2>0$.
		
		For the high-frequency range $k>K_d$, the inequality
		$|a-b|^2\leq2|a|^2+2|b|^2$ gives
		\begin{align*}
			&e^{2(1+\varepsilon)dq_d^2}
			\sum_{|\bbf m|^2>K_d}
			\left|\mathcal J_d(2\pi R|\bbf m|)
			-q_d^{|\bbf m|^2}\right|^2\\
			&\quad\leq2e^{2(1+\varepsilon)dq_d^2}
			\sum_{|\bbf m|^2>K_d}
			|\mathcal J_d(2\pi R|\bbf m|)|^2
			+2e^{2(1+\varepsilon)dq_d^2}
			\sum_{k>K_d}r_d(k)q_d^{2k}.
		\end{align*}
		The first term on the right tends to zero by the strengthened conclusion
		of Lemma~\ref{lem:common-tail} with $H=2(1+\varepsilon)$, while the second is
		at most
		\[
		2\exp\left(
		-\bigl(B_d-2\mathrm e-2-2\varepsilon\bigr)dq_d^2+o(1)\right)
		\longrightarrow0.
		\]
		Thus the high-frequency contribution also tends to zero.
		
		Adding the low-, intermediate-, and high-frequency estimates yields
		\[
		e^{2(1+\varepsilon)dq_d^2}
		\sum_{\bbf m\in\mathbb Z^d}
		\left|\mathcal J_d(2\pi R|\bbf m|)
		-q_d^{|\bbf m|^2}\right|^2
		\longrightarrow0.
		\]
		This proves the required Fourier coefficient estimate.
	\end{proof}
	
	\subsection{Proofs of the two limit theorems}
	
	\begin{proof}[Proof of Theorem~\ref{thm:logarithmic}]
		By assumption, $e^{-A_d}$ has a positive lower bound and
		$|A_d|=O(\log\log d)$. Hence $\ell_d\sim\log d$. Moreover,
		\[
		q_d^2=\frac{e^{-A_d}}d\longrightarrow0,
		\qquad dq_d^2=e^{-A_d}=o(\log d)=o(\sqrt d).
		\]
		Thus
		Lemma~\ref{lem:common-tail} and
		Propositions~\ref{prop:theta-approx} and~\ref{prop:fourier-comparison} apply.
		In particular, $T_d$ satisfies
		\[
		\lim_{d\to\infty}\operatorname{meas}\left\{\bbf x\in[0,1)^d:
		a<\frac{\log T_d(\bbf x)+e^{-A_d}}{\sqrt{2e^{-A_d}}}\leq b
		\right\}
		=\frac{1}{\sqrt{2\pi}}\int_a^b e^{-u^2/2}\,\mathrm{d}u.
		\]
		
		By Parseval's identity and Proposition~\ref{prop:fourier-comparison}, for
		every fixed $\varepsilon>0$,
		\[
		e^{2(1+\varepsilon)e^{-A_d}}
		\int_{[0,1)^d}\left|
		\frac{f_d(\bbf x)}{\mu_d}-T_d(\bbf x)
		\right|^2\,\mathrm{d}\bbf x\longrightarrow0.
		\]
		Fix $M>0$ and $\eta>0$. On the set
		\[
		\left|\frac{\log T_d(\bbf x)+e^{-A_d}}
		{\sqrt{2e^{-A_d}}}\right|\leq M,
		\]
		we have
		\[
		T_d(\bbf x)\geq
		\exp\left(-e^{-A_d}-M\sqrt{2e^{-A_d}}\right).
		\]
		Chebyshev's inequality therefore gives
		\[
		\operatorname{meas}\left\{\bbf x:\left|
		\frac{f_d(\bbf x)}{\mu_dT_d(\bbf x)}-1\right|>\eta,
		\left|\frac{\log T_d(\bbf x)+e^{-A_d}}
		{\sqrt{2e^{-A_d}}}\right|\leq M\right\}
		\leq\frac{e^{2e^{-A_d}+2M\sqrt{2e^{-A_d}}}}{\eta^2}
		\int_{[0,1)^d}\left|
		\frac{f_d(\bbf x)}{\mu_d}-T_d(\bbf x)\right|^2\,\mathrm{d}\bbf x.
		\]
		For fixed $M$, the right-hand side tends to zero. Indeed, it equals
		\[
		\eta^{-2}e^{-2\varepsilon e^{-A_d}+2M\sqrt{2e^{-A_d}}}
		\left[e^{2(1+\varepsilon)e^{-A_d}}
		\left\|\frac{f_d}{\mu_d}-T_d\right\|_2^2\right],
		\]
		where the exponential prefactor is uniformly bounded as a function of
		$e^{-A_d}>0$, and the bracketed factor tends to zero. The limit theorem for
		$T_d$ also shows that the
		standardized logarithms form a tight family. Moreover,
		\[
		\operatorname{meas}\left\{\left|\frac{f_d}{\mu_dT_d}-1\right|>\eta\right\}
		\leq\operatorname{meas}\left\{
		\left|\frac{\log T_d+e^{-A_d}}{\sqrt{2e^{-A_d}}}\right|>M\right\}
		+\operatorname{meas}\left\{
		\left|\frac{f_d}{\mu_dT_d}-1\right|>\eta,
		\left|\frac{\log T_d+e^{-A_d}}{\sqrt{2e^{-A_d}}}\right|\leq M
		\right\}.
		\]
		Letting first $d\to\infty$ and then $M\to\infty$ yields
		\[
		\frac{f_d}{\mu_dT_d}\longrightarrow1\quad\text{in measure}.
		\]
		Since
		\[
		R_d^2=\frac{(d+2)\ell_d}{4\pi^2}>\frac d4
		\]
		for all sufficiently large $d$, every point of $[0,1)^d$ lies within
		distance $\sqrt d/2<R_d$ of a point of $\mathbb Z^d$. Thus
		$f_d(\bbf x)>0$ everywhere for large $d$. Because $T_d>0$, the continuous
		mapping theorem gives
		$\log(f_d/(\mu_dT_d))\to0$ in measure. Finally,
		$\liminf e^{-A_d}>0$, so
		\[
		\frac{\log(f_d/(\mu_dT_d))}{\sqrt{2e^{-A_d}}}
		\longrightarrow0\quad\text{in measure}.
		\]
		Finally,
		\[
		\log\left(\frac{f_d}{\mu_d}\right)
		=\log T_d+\log\left(\frac{f_d}{\mu_dT_d}\right).
		\]
		Combining this identity with the limit for $T_d$ and applying Slutsky's theorem
		proves \eqref{eq:logarithmic-clt}.
	\end{proof}
	
	\begin{proof}[Proof of the lognormal corollary]
		If $A_d\to\tau$, Theorem~\ref{thm:logarithmic} gives
		\[
		\log(f_d/\mu_d)\mathrel{\Rightarrow}
		N(-e^{-\tau},2e^{-\tau}).
		\]
		The continuous mapping theorem applied to the exponential function gives the
		stated lognormal limit and its density formula.
	\end{proof}
	
	\begin{proof}[Proof of Theorem~\ref{thm:concentration}]
		By definition,
		\[
		\ell_d=\log d+A_d,
		\qquad
		q_d^2=\frac{e^{-A_d}}{d}.
		\]
		The conditions in \eqref{eq:concentration-assumptions} imply that
		$\ell_d\to\infty$, $\ell_d=o(\sqrt d)$, $q_d\to0$, and
		$e^{-A_d}=dq_d^2\to0$. Hence the hypotheses of
		Lemma~\ref{lem:common-tail} and
		Propositions~\ref{prop:theta-approx} and~\ref{prop:fourier-comparison} all hold.
		Retain the notation $T_d$ from
		Proposition~\ref{prop:theta-approx}. By
		\eqref{eq:fourier-expansion}, orthogonality of the exponential system, and
		Proposition~\ref{prop:fourier-comparison},
		\[
		\int_{[0,1)^d}\left|
		\frac{f_d(\bbf x)}{\mu_d}-T_d(\bbf x)
		\right|^2\,\mathrm{d}\bbf x=o(e^{-A_d}).
		\]
		Thus, for every $\eta>0$,
		\[
		\operatorname{meas}\left\{\bbf x\in[0,1)^d:\left|
		\frac{f_d(\bbf x)/\mu_d-T_d(\bbf x)}{\sqrt{2e^{-A_d}}}
		\right|>\eta\right\}
		\leq\frac{1}{2\eta^2e^{-A_d}}
		\int_{[0,1)^d}\left|
		\frac{f_d(\bbf x)}{\mu_d}-T_d(\bbf x)
		\right|^2\,\mathrm{d}\bbf x
		\longrightarrow0.
		\]
		Combining this with the distributional limit in
		Proposition~\ref{prop:theta-approx} proves
		\eqref{eq:concentration-clt}. The standardized variables are tight, and
		$\sqrt{2e^{-A_d}}\to0$; hence $f_d/\mu_d\to1$ in measure.
		
		For the variance asymptotic, set
		\[
		U_d(\bbf x)=\frac{f_d(\bbf x)}{\mu_d}-1,
		\qquad
		V_d(\bbf x)=T_d(\bbf x)-1.
		\]
		The $L^2$ estimate above gives
		\[
		\|U_d-V_d\|_2=o(e^{-A_d/2}).
		\]
		Proposition~\ref{prop:theta-approx} also gives
		$\|V_d\|_2^2\sim2e^{-A_d}$, and hence
		$\|U_d\|_2=O(e^{-A_d/2})$. Therefore
		\[
		\left|\|U_d\|_2^2-\|V_d\|_2^2\right|
		\leq
		\|U_d-V_d\|_2\bigl(\|U_d\|_2+\|V_d\|_2\bigr)
		=o(e^{-A_d}).
		\]
		Finally, $\int_{[0,1)^d}U_d\,\mathrm{d}\bbf x=0$, so
		$\|U_d\|_2^2=\operatorname{Var}(f_d/\mu_d)$. Hence
		\[
		\int_{[0,1)^d}\left|\frac{f_d(\bbf x)}{\mu_d}-1\right|^2
		\,\mathrm{d}\bbf x
		\sim2e^{-A_d},
		\]
		which is \eqref{eq:concentration-var}.
	\end{proof}
	
	\section*{Declaration on the Use of Generative AI}
	
	During the preparation of this manuscript, the author used ChatGPT (OpenAI)
	to translate an initial Chinese-language draft into English and to assist with
	language editing and stylistic refinement. All AI-assisted text was subsequently
	reviewed and revised by the author, who takes full responsibility for the
	accuracy, originality, and integrity of the manuscript.

\end{document}